\bfseries\contentslabel{2em}}%
\footnotesize\contentslabel{3em}}%
\numberwithin{equation}{section}
\newcommand{\Xc}{\mathcal{X}}
\newcommand{\Dc}{\mathcal{D}}
\newcommand{\Lc}{\mathcal{L}}
\newcommand{\gr}{\nabla}
\newcommand{\reals}{\mathbb{R}}
\newcommand{\half}{\tfrac12}
\newcommand{\ip}[2]{\langle {#1},\, {#2} \rangle}
\newcommand{\norm}[1]{\| {#1} \|}
\newcommand{\dc}{DC\xspace}
\newcommand{\ind}{\chi}
\DeclareMathOperator*{\argmin}{argmin}
\DeclareMathOperator{\prox}{prox}
\DeclareMathOperator*{\subjto}{s.t.}
\DeclareMathOperator*{\argmax}{argmax}
\newtheorem{theorem}{Theorem}[section]
\newtheorem{lemma}[theorem]{Lemma}
\newtheorem{prop}[theorem]{Proposition}
\newtheorem{corollary}[theorem]{Corollary}
\theoremstyle{definition}
\newtheorem{example}[theorem]{Example}
\numberwithin{equation}{section}
\title{CCCP is Frank-Wolfe in disguise}
\begin{document}
\author{\name{Alp Yurtsever} \email{alp.yurtsever@umu.se}\\
   \addr{Umeå University}\\[2pt]
   \name{Suvrit Sra} \email{suvrit@mit.edu}\\
   \addr{Massachusetts Institute of Technology}
}

\maketitle

\begin{abstract}
This paper uncovers a simple but rather surprising connection: it shows that the well-known convex-concave procedure (CCCP) and its generalization to constrained problems are both special cases of the Frank-Wolfe (FW) method. This connection not only provides insight of deep (in our opinion) pedagogical value, but also transfers the recently discovered convergence theory of nonconvex Frank-Wolfe methods immediately to CCCP, closing a long-standing gap in its \emph{non-asymptotic} convergence theory. We hope the viewpoint uncovered by this paper spurs the transfer of other advances made for FW to both CCCP and its generalizations.
\end{abstract}

\section{Introduction}
\lettrine[lines=3]{\color{BrickRed}W}e study non-convex \emph{difference of convex (\dc)} optimization problems of the form:
\begin{equation}
  \label{eq:1}
  \min_x\quad f(x)-g(x),\quad x \in \Dc,
\end{equation}
where both $f$ and $g$ are smooth convex functions and $\Dc$ is a constraint set that might itself be non-convex (we will specify its structure later). Formulation~\eqref{eq:1} is a smooth \dc program, for which a powerful local method is Convex-Concave Procedure (CCCP)~\citep{yuille2003concave} that reduces~\eqref{eq:1} to a sequence of convex problems. CCCP is not only algorithmically appealing, it is also widely applicable and numerous algorithms can be viewed as special cases of CCCP, most notably the famous EM algorithm---see \citep[\S4][]{yuille2003concave}.

But despite its wide applicability (see \S\ref{sec:back-cccp}) and elegance, convergence theory for CCCP is quite limited. Its \emph{asymptotic} convergence can be obtained by viewing it as a specialization of the \dc algorithm (DCA)~\citep{le2018dc}, or via a more refined analysis designed for CCCP in~\citep{lanckriet2009convergence}. A long-standing gap in the literature on CCCP has been to obtain \emph{non-asymptotic} convergence theory that establishes convergence to an $\varepsilon$-stationary point in $O(\text{poly}(1/\varepsilon))$ or fewer iterations. %

The starting point of this paper is a discovery that is elementary yet surprising: \emph{CCCP is a special case of the Frank-Wolfe (FW) method}! This discovery not only provides insight of deep (in our opinion) pedagogical value, but it also transfers the recently discovered convergence theory of nonconvex FW to CCCP, which closes the abovementioned gap in CCCP's non-asymptotic convergence theory.

\subsection{Main contributions}
\vspace*{-3pt}
In light of this short background, we summarize now the key contributions of this paper.
\begin{list}{{\color{darkred!90}\tiny$\blacksquare$}}{\leftmargin=2em}\vspace*{-5pt}
  \setlength{\itemsep}{2pt}
\item We recognize the basic (unconstrained) CCCP method to be a special case of Frank-Wolfe. By the same argument, even its generalization to convex constrained instances of~\eqref{eq:1} is a special case of FW. This realization allows us to immediately transfer recently discovered non-asymptotic convergence theory of FW to CCCP and convex constrained CCCP.
\item Building on the above connection, we subsequently propose a new variant of FW (called FW+) that applies to the most general form of CCCP (called CCCP+), where the constraint set $\Dc$ in \eqref{eq:1} is specified via several \dc constraints. The variant FW+ not only allows us to syntactically view CCCP+ as a special case, but more importantly allows us to transfer non-asymptotic convergence guarantees that we prove for FW+ directly to CCCP+.
\end{list}
While the FW+ variant and its analysis are slightly more subtle than basic FW, the connection between CCCP and FW uncovered by our work is remarkably simple, which makes it all the more surprising that it was missed in the vast body of previous work on FW and CCCP. The introduction of FW+ itself is an immediate consequence of the connection uncovered. It is also quite plausible that the connections described in this paper will have various further consequences, including development of new variants of CCCP and other related methods. %
For a more concrete discussion of implications, see the discussion in Section~\ref{sec:disc}.

Finally, we note a few additional connections of the view uncovered in this paper in the appendix. In particular, we show that various other methods such as the proximal point method, mirror descent and mirror prox can also be seen as special instances of the FW method. A deeper exploration of these connections is left as future work. 

\vspace*{-2pt}
\subsection{Related work}
\vspace*{-4pt}
\textbf{CCCP.} The CCCP method of~\citet{yuille2003concave} is a special case of the more general \dc algorithm (DCA)~\citep{tao1997convex,le2018dc}.  CCCP often provides a strong baseline method for tackling non-convex problems where a \dc structure is already known or relatively easy to obtain--- see~\citep{lipp2016variations} for some recent variations on the CCCP method. As already noted, asymptotic convergence of CCCP follows from the more general convergence theory of DCA~\citep{tao1997convex}, while a simplified, more direct convergence analysis was obtained in~\citep{lanckriet2009convergence}, who analyzed convergence of both function values and iterates; moreover, these convergence results apply to the most general CCCP+ formulation (\dc objective with \dc constraints). A more recent work~\citep{khamaru2018convergence} considers gradient and subgradient based alternatives to DCA and CCCP, establishing their corresponding non-asymptotic convergence; however, its focus is on alternative and its guarantees do not transfer to CCCP, and thus \emph{a fortiori} not to CCCP+. Both CCCP and DCA have received a large number of applications, in a variety of areas including statistics, machine learning, signal processing, operations research, and many more---we refer the reader to the extensive list documented in the survey~\citep{le2018dc} and to examples in~\citep{yuille2003concave}.

After this work was completed, we discovered an important item of related work~\citep{abbaszadehpeivasti2021rate}. The authors therein propose non-asymptotic convergence rate guarantees for unconstrained DC optimization. They present two main results: (i) Under the assumption that $f$ and $g$ are convex and at least one of them is smooth, they show $\min_{\tau \in [k]} \norm{v_\tau - u_\tau} \leq \mathcal{O}(1/\sqrt{k})$ where $v_\tau \in \partial f(x_\tau)$ and $u_\tau \in \partial g(x_\tau)$; %
 (ii) For the non-smooth setting, they show $\min_{\tau \in [k]} f(x_\tau) - f(x_{\tau+1}) - \ip{u_\tau}{x_\tau - x_{\tau+1}} \leq \mathcal{O}(1/k)$ where $u_\tau \in \partial g(x_\tau)$. 
This second result matches our guarantees in \Cref{cor:basic-cccp}. As we will also note below, \Cref{cor:basic-cccp} does not require any assumption on the smoothness of functions, and its guarantees also hold with subgradients. But these guarantees (and therefore those of the above cited paper) have an important \emph{caveat}: even if the ``gap'' $f(x_k) - f(x_{k+1}) - \ip{u_k}{x_k - x_{k+1}} = 0$, it does not imply stationarity when $g$ is non-differentiable. Thus, a more careful study of the non-smooth case remains open. 

Our analysis is fundamentally different from the methods used in \citep{abbaszadehpeivasti2021rate}. Their proof is based on the performance estimation technique of \citet{drori2014performance}, which is a computer assisted methodology for estimating the worst-case complexity of an algorithm by solving semidefinite programs. They conjecture the convergence bounds via systematic numerical experimentation and then verify these bounds analytically. Hence, the proof provides limited insight and does not extend to the more general constrained settings studied in our paper. In contrast, our analysis is based on a simple yet deep connection between CCCP and FW. This deeper recognition allows us not only to extend our guarantees to  constrained CCCP (called CCCP+ in the main paper), but also leads to the discovery of FW+.

\textbf{FW.} %
\citet{fw_original} proposed the FW algorithm originally for minimizing a convex quadratic function over a polytope and proved $\mathcal{O}(k^{-1})$ convergence rate in this setting.  \citet{levitin1966constrained} extended the analysis and proved the same rate for minimization of an arbitrary smooth convex function over a generic convex compact set, and \citet{canon1968tight} showed that this rate is optimal. The analysis is extended for a broader H\"older-smooth function class by \citet{nesterov2018complexity}. \citet{jaggi2013revisiting} presented an affine invariant analysis for the FW algorithm by replacing smoothness with the bounded curvature assumption, see \eqref{eqn:curvature-constant}. The analysis of FW for non-convex functions appeared in \citep{lacoste16}---the method finds a stationary point as the gap function converges to zero with $\smash{\mathcal{O}(k^{-1/2})}$ rate, see \eqref{eqn:fw-gap} for the gap function. 

Design variants of FW with away, pairwise, or in-face steps and line-search or momentum strategies; and extensions of FW for stochastic gradients or block-coordinate updates are extensively studied in the literature. We refer to \citep{kerdreux2020accelerating} for an excellent overview on these recent developments. To our knowledge, the variant FW+ introduced in this paper is new.

\section{Background on Frank-Wolfe and CCCP}
\subsection{Frank-Wolfe Algorithm}
\label{sec:back-fw}
We begin by recalling the basic Frank-Wolfe method (FW) and some of its key properties. We focus in particular on the nonconvex case, as it is key to the rest of our theory. Consider therefore the prototypical constrained optimization template: 
\begin{equation}
\label{eq:concave-min}
    \min_{\omega} \ \ \phi(\omega)\quad\subjto\quad\omega \in \Dc,
\end{equation}
where $\Dc$ is a closed and convex set in $\reals^n$ and $\phi$ is a continuously differentiable (potentially nonconvex) function over the domain $\Dc$. FW applied to~\eqref{eq:concave-min} performs the following steps for $k=1,\ldots$
\begin{equation}\label{eqn:FW}\tag{FW}
\begin{aligned}
    & \omega_k^* \in \argmin_{\omega \in \Dc} ~~ \phi(\omega_k) + \ip{\gr \phi(\omega_k)}{\omega - \omega_k},\\
    & \omega_{k+1} = (1-\eta_k) \omega_k + \eta_k \omega_k^*,
\end{aligned}
\end{equation}
where $\eta_k \in [0,1]$ is a step-size. There are multiple design choices for the step-size. A common choice is the greedy step-size 
\begin{equation}\label{eqn:FW-stepsize}
    \eta_k = \argmin_{\eta \in [0,1]} ~~~ \phi\big ((1-\eta) \omega_k + \eta \omega_k^* \big).
\end{equation}
The standard convergence theory of FW crucially depends on the boundedness of \emph{curvature constant} of $\phi$ over $\Dc$, defined as follows:
\begin{equation}\label{eqn:curvature-constant}
    C_\phi := \sup_
    {\substack{\omega,\hat{\omega} \in \Dc;~ \eta \in [0,1] \\[0.15em] \bar{\omega} = (1-\eta)\omega + \eta \hat{\omega}}} 
    ~~~~ \frac{2}{\eta^2}\Big( \phi(\bar{\omega}) - \phi(\omega) - \ip{\gr \phi(\omega)}{\bar{\omega} - \omega} \Big).
\end{equation}
The bounded curvature assumption is closely related to the smoothness of $\phi$. If $\gr \phi$ is $L$-Lipschitz continuous on $\Dc$, then $C_\phi \leq L D^2$, where $D$ denotes the diameter of $\Dc$. Moreover, it is easy to see that $C_\phi \leq 0$ if $\phi$ is concave. This becomes useful in our analysis later in this paper, and allows us to derive guarantees without any Lipschitz smoothness assumption on the functions involved.

If $\phi$ is convex and $C_\phi$ is bounded, then the sequence $\{\omega_k\}_{k\ge 0}$ generated by \eqref{eqn:FW} converges to a solution in objective value (see e.g.,~\citep[Theorem~1]{jaggi2013revisiting}); where the convergence rate is
\begin{equation*}
    \phi(\omega_k) - \phi^\star \leq \frac{2C_\phi}{k+1}, \quad \text{where} \quad  \phi^\star := \min_{\omega \in \Dc} \phi(\omega).
\end{equation*}
When $\phi$ is non-convex, \eqref{eqn:FW} can be shown to find a stationary point~\citep[Theorem~1]{lacoste16}, with the following guarantee: there exists an index $\tau \in [k]$, such that %
\begin{equation*}
  \ip{\gr \phi(\omega_\tau)}{\omega_\tau - \omega} \leq \frac{\max\{C_\phi, \ 2(\phi(\omega_1) - \phi^\star)\}}{\sqrt{k}} \qquad \forall \omega \in \Dc. 
\end{equation*}
This bound is often presented in terms of the so-called FW-gap:
\begin{equation}
\label{eqn:fw-gap}
    \mathrm{gap}(\omega_{\tau}) := \max_{\omega \in \Dc} ~~ \ip{\gr \phi(\omega_\tau)}{\omega_\tau - \omega}
    = \ip{\gr \phi(\omega_\tau)}{\omega_\tau - \omega_\tau^*}.
\end{equation}
This quantity naturally arises in the FW algorithm and its analysis, and it  provides a reliable stopping criterion. 

In our analysis, we particularly need to apply FW to concave objectives. In this case, \eqref{eqn:FW} exhibits better convergence rates than general non-convex costs. More precisely, we note the following result.
\begin{lemma}
\label{lem:fw-concave}
Suppose that the sequence $\{\omega_k\}$ is generated by the FW method applied to Problem~\eqref{eq:concave-min} with a concave objective. Then, there exists $\tau \in [k]$, such that
\begin{equation*}
    \ip{\gr \phi(\omega_\tau)}{\omega_\tau - \omega} \leq \frac{\phi(\omega_1) - \phi^\star}{k} \qquad \forall \omega \in \Dc. 
\end{equation*} 
\end{lemma}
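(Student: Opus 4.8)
The plan is to exploit the one structural fact that distinguishes the concave case: as noted right after \eqref{eqn:curvature-constant}, $C_\phi \leq 0$ when $\phi$ is concave. This removes the quadratic penalty term from the standard Frank--Wolfe descent estimate and upgrades the per-iteration progress to a full gap-sized decrease, which is precisely what converts the generic $O(k^{-1/2})$ nonconvex rate into the claimed $O(k^{-1})$ rate.

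First I would instantiate the curvature inequality implied by \eqref{eqn:curvature-constant} at the pair $(\omega_k,\omega_k^*)$ with $\eta=1$, i.e.\ at $\bar\omega=\omega_k^*$. This gives
\[
\phi(\omega_k^*) \leq \phi(\omega_k) + \ip{\gr\phi(\omega_k)}{\omega_k^* - \omega_k} + \tfrac12 C_\phi \leq \phi(\omega_k) + \ip{\gr\phi(\omega_k)}{\omega_k^* - \omega_k},
\]
where the last step drops the nonpositive curvature term. Recognizing from \eqref{eqn:fw-gap} that $\ip{\gr\phi(\omega_k)}{\omega_k^* - \omega_k} = -\mathrm{gap}(\omega_k)$, this reads $\phi(\omega_k^*) \leq \phi(\omega_k) - \mathrm{gap}(\omega_k)$.

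Next I would convert this bound on the full-step point into a bound on the actual iterate. Using the greedy step-size \eqref{eqn:FW-stepsize}, which by definition selects the best point along the segment $[\omega_k,\omega_k^*]$, the produced $\omega_{k+1}$ can only improve on the specific choice $\eta=1$; hence $\phi(\omega_{k+1}) \leq \phi(\omega_k^*) \leq \phi(\omega_k) - \mathrm{gap}(\omega_k)$. (Equivalently, since $\eta\mapsto\phi((1-\eta)\omega_k+\eta\omega_k^*)$ is concave on $[0,1]$ its minimum sits at an endpoint, and the displayed inequality identifies $\eta_k=1$ as the minimizing one.) Summing this per-step descent over $\tau=1,\ldots,k$ telescopes to $\sum_{\tau=1}^{k}\mathrm{gap}(\omega_\tau) \leq \phi(\omega_1) - \phi(\omega_{k+1}) \leq \phi(\omega_1) - \phi^\star$.

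Finally I would extract a single good index: since the minimum of the $k$ gaps is at most their average, there exists $\tau\in[k]$ with $\mathrm{gap}(\omega_\tau) \leq \frac{\phi(\omega_1)-\phi^\star}{k}$, and because $\mathrm{gap}(\omega_\tau)=\max_{\omega\in\Dc}\ip{\gr\phi(\omega_\tau)}{\omega_\tau-\omega}$ dominates $\ip{\gr\phi(\omega_\tau)}{\omega_\tau-\omega}$ for every feasible $\omega$, the claimed uniform bound follows. I do not anticipate a genuine obstacle; the only delicate point is the step-size, since the telescoping requires the full gap-sized decrease at every iteration. This is exactly guaranteed by the greedy rule (or by simply taking $\eta_k=1$), and it is precisely where concavity, through $C_\phi\leq 0$, does the essential work.
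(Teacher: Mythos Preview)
Your proof is correct and follows essentially the same route as the paper: both use concavity to obtain the per-step inequality $\phi(\omega_{k+1}) \le \phi(\omega_k) - \mathrm{gap}(\omega_k)$, telescope, and then pass from average to minimum. The only cosmetic difference is that the paper invokes concavity directly and asserts $\eta_k=1$ at the outset, whereas you route concavity through $C_\phi\le 0$ and use the greedy property of the step-size before identifying $\eta_k=1$.
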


\begin{proof}
First, observe that $\eta_k = 1$ in \eqref{eqn:FW-stepsize} when $\phi$ is concave. Then, \eqref{eqn:FW} becomes
\begin{equation*}
    \omega_{k+1} \in \argmin_{\omega} ~~ \phi(\omega_k) + \ip{\gr \phi(\omega_k)}{\omega - \omega_k} \quad \subjto \quad \omega \in \Dc. 
\end{equation*}
By concavity of $\phi$, we have 
\begin{equation}\label{eq:proof-concave-fw-s1}
    \max_{\omega \in \Dc} \ \ 
    \ip{\gr \phi(\omega_k)}{\omega_k - \omega}
    = \ip{\gr \phi(\omega_k)}{\omega_k - \omega_{k+1}} 
    \leq \phi(\omega_k) - \phi(\omega_{k+1}).
\end{equation}
Then, averaging \eqref{eq:proof-concave-fw-s1} over $k$ gives
\begin{equation}\label{eq:proof-concave-final}
    \frac{1}{k} \sum_{i=1}^k \max_{\omega \in \Dc} \ \ 
    \ip{\gr \phi(\omega_i)}{\omega_i - \omega} 
    \leq \frac{1}{k} (\phi(\omega_1) - \phi_\star).
\end{equation}
We conclude the proof by noting that the minimum over $k$ is less than or equal to the average.
\end{proof}

\subsection{The Convex-Concave Procedure (CCCP)}
\label{sec:back-cccp}
The starting point for the Convex-Concave Procedure (CCCP) of~\citet{yuille2003concave} is the unconstrained \dc program
\begin{equation}
  \label{eq:15}
  \min_x\quad f(x)-g(x),
\end{equation}
where both $f$ and $g$ are $C^1$ (once continuously differentiable) convex functions; CCCP can also be viewed as a special case of the more general DCA algorithm~\citep{tao1997convex} that does not require differentiability. The key idea behind the success of CCCP is to use the convexity of $g(x)$ to linearize it and obtain the following global upper bound on $f(x)-g(x)$:
\begin{equation}
  \label{eq:16}
  Q(x;y) := f(x) - g(y) - \ip{\gr g(y)}{x-y}.
\end{equation}
At each iteration, CCCP then updates its guess by solving the convex problem
\begin{equation}
  \label{eq:17}
  x_{k+1} \in\quad\argmin_{x}\ Q(x; x_k).
\end{equation}
CCCP is thus a specific MM algorithm~\citep{hunter2004tutorial}, and owing to the update~\eqref{eq:17}, it generates a monotonically decreasing sequence $\{f(x_k)-g(x_k)\}_{k\ge 0}$ of objective values. When $f$ is differentiable, update~\eqref{eq:17} is tantamount to the implicit update $\gr f(x_{k+1})=\gr g(x_k)$. Note that the subproblem~\eqref{eq:17} is a convex optimization problem that \emph{does not} approximate the function $f(x)$, and thus likely offers a tighter approximate model to the \dc cost~\eqref{eq:15}, an aspect that might help explain its strong empirical performance~\citep{lipp2016variations}.

The original CCCP paper~\citep{yuille2003concave} presented various applications that could be viewed through the CCCP lens, of which the EM algorithm~\citep{dempster1977maximum} and Sinkhorn's method for matrix scaling~\citep{sinkhorn1967diagonal} are two important special cases. A formal study of CCCP's asymptotic convergence was undertaken in~\citep{lanckriet2009convergence}, who studied all three variants: unconstrained CCCP, convex constrained, as well \dc constrained CCCP. However, even for the simplest unconstrained version, a non-asymptotic analysis of its convergence rate (as a function of iteration count $k$) has been long open, except for the preprint~\citep{abbaszadehpeivasti2021rate} that has appeared parallel to this work and addresses the unconstrained setting.

\section{Unconstrained CCCP and convex constrained CCCP via FW}
\label{sec:basic-cccp}
As noted above, for the unconstrained \dc program
\begin{equation}\label{eqn:basic-dc}
    \min_{x} \quad f(x) - g(x),
\end{equation}
CCCP iteratively performs the following implicit update:
\begin{equation}\label{eqn:basic-cccp}\tag{CCCP}
    \gr f(x_{k+1}) = \gr g(x_k).
\end{equation}

Our first result shows that CCCP can be viewed as a special case of the FW method. 

\begin{prop}
\label{thm:basic-cccp}
CCCP is equivalent to FW applied to the following reformulation of Problem~\eqref{eqn:basic-dc}:
\begin{equation}\label{eqn:basic-dc-epi}
    \min_{x,t} \quad t - g(x) \quad \subjto \quad f(x) \leq t.
\end{equation}
\end{prop}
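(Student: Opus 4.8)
The plan is to show that Frank–Wolfe applied to the epigraph reformulation~\eqref{eqn:basic-dc-epi} produces exactly the CCCP update~\eqref{eqn:basic-cccp}. The key observation is that the objective $\phi(x,t) = t - g(x)$ is concave (it is linear in $t$ and the negation of a convex function in $x$), so I can invoke the concave specialization of FW worked out in \Cref{lem:fw-concave}: when the objective is concave, the greedy step-size satisfies $\eta_k = 1$, and the FW update collapses to solving the linearized subproblem exactly, i.e. $\omega_{k+1} \in \argmin_{\omega \in \Dc} \phi(\omega_k) + \ip{\gr \phi(\omega_k)}{\omega - \omega_k}$.

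So the first step is to write down the variables and the feasible set for~\eqref{eqn:basic-dc-epi}: set $\omega = (x,t)$ and $\Dc = \{(x,t) : f(x) \leq t\}$, which is convex since $f$ is convex (it is the epigraph of $f$). I would confirm $\Dc$ is closed and convex so that FW is well-defined. Next I compute the gradient of $\phi$ at the current iterate $\omega_k = (x_k, t_k)$: since $\phi(x,t) = t - g(x)$, we have $\gr \phi(x_k,t_k) = (-\gr g(x_k),\, 1)$. Substituting into the (concave) FW subproblem, the linear minimization over $(x,t) \in \Dc$ becomes
\begin{equation*}
    (x_{k+1}, t_{k+1}) \in \argmin_{f(x)\le t} \ \ip{-\gr g(x_k)}{x} + t
    = \argmin_{f(x)\le t} \ t - \ip{\gr g(x_k)}{x}.
\end{equation*}

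The third step is to simplify this linear program. Since the objective increases in $t$ and $t$ is constrained only from below by $f(x) \leq t$, the minimizer must saturate the constraint, giving $t_{k+1} = f(x_{k+1})$. Eliminating $t$ reduces the subproblem to $x_{k+1} \in \argmin_x \ f(x) - \ip{\gr g(x_k)}{x}$. This is precisely the minimization of $Q(x; x_k)$ from~\eqref{eq:16} up to constants independent of $x$, so it recovers the CCCP subproblem~\eqref{eq:17}. Taking first-order optimality conditions of this unconstrained convex problem yields $\gr f(x_{k+1}) = \gr g(x_k)$, which is exactly~\eqref{eqn:basic-cccp}. To close the equivalence I would also note that any CCCP iterate $x_{k+1}$ can be lifted to a feasible FW iterate $(x_{k+1}, f(x_{k+1}))$, so the two procedures generate the same $x$-sequence.

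The main subtlety — not a deep obstacle but the point requiring care — is justifying that the FW step-size is genuinely $\eta_k = 1$ and that the linear subproblem attains its minimum at the boundary $t = f(x)$. The former follows from concavity of $\phi$ exactly as in the proof of \Cref{lem:fw-concave}; the latter is the one place where I must argue that the unbounded epigraph constraint still yields a well-posed linear minimization, which holds because the reduced problem $\min_x f(x) - \ip{\gr g(x_k)}{x}$ inherits solvability from the original CCCP subproblem's assumed attainability. One should also remark that $\Dc$ here is unbounded, so the curvature constant $C_\phi$ is not automatically finite; however, since $\phi$ is concave we have $C_\phi \le 0$, which is precisely the feature noted earlier in the excerpt that lets the argument go through without any compactness or Lipschitz-smoothness assumption.
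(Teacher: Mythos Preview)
Your proposal is correct and follows essentially the same route as the paper: recognize that the objective $t-g(x)$ is concave so the greedy step-size is $\eta_k=1$, write out the FW linear-minimization subproblem over the epigraph, and show it collapses to the CCCP update $\nabla f(x_{k+1})=\nabla g(x_k)$. The only cosmetic difference is that the paper derives $t_k^*=f(x_k^*)$ and $\nabla f(x_k^*)=\nabla g(x_k)$ via the KKT conditions (Lagrangian stationarity plus complementary slackness), whereas you eliminate $t$ directly by monotonicity in $t$ and then take first-order optimality of the reduced unconstrained convex problem; the two arguments are equivalent here.
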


\begin{proof}
If we apply FW to \eqref{eqn:basic-dc-epi}, the linear minimization step is
\begin{equation}\label{eqn:basic-dc-epi-lmo}
    (x^*_k, \ t^*_k) \quad \in \quad \arg\min_{(x,t)} \quad t - \ip{\gr g(x_k)}{x} \quad \subjto \quad f(x) \leq t.
\end{equation}
KKT conditions are necessary and sufficient for optimality since \eqref{eqn:basic-dc-epi-lmo} is a convex optimization problem. The Lagrangian of this problem is 
\begin{equation*}
    \Lc (x,t,\lambda) = t - \ip{\gr g(x_k)}{x} + \lambda (f(x) - t).
\end{equation*}
Lagrangian stationarity condition implies
\begin{equation} 
\label{eq:cccp-subprob}
\left. \begin{aligned}
\gr_x \Lc (x,t,\lambda^*) \big|_{x = x_k^*,\, t = t_k^*} & = -\gr g(x_k) + \lambda^* \gr f(x_k^*) = 0 \\
\gr_t \Lc (x,t,\lambda^*) \big|_{x = x_k^*,\, t = t_k^*}  & = 1 - \lambda^* = 0
\end{aligned} ~~ \right \} ~~
\gr f(x_k^*) = \gr g(x_k).
\end{equation}
We also get $t_k^* = f(x_k^*)$ by the complementary slackness condition. 

Finally, FW updates its estimate via 
\begin{equation*}
\begin{aligned}
    x_{k+1} = (1-\eta_k) x_k + \eta_k x_k^*, \quad t_{k+1} = (1-\eta_k) t_k + \eta_k t_k^*,
\end{aligned}
\end{equation*}
where $\eta_k \in [0,1]$ is chosen to minimize the objective function in \eqref{eqn:basic-dc-epi}. 
Since this function is concave, $\eta_k = 1$, and we conclude that the linear system in \eqref{eq:cccp-subprob} is exactly the CCCP subproblem.
\end{proof}

\Cref{thm:basic-cccp} recognizes a simple derivation of the CCCP update -- reformulate the unconstrained \dc program as a concave minimization over a convex set and apply FW. 
This recognition transfers the convergence theory of FW to CCCP as we present in the next corollary.

\begin{corollary}\label{cor:basic-cccp}
Suppose that the sequence $\{x_k\}$ is generated by \eqref{eqn:basic-cccp} applied to Problem~\eqref{eqn:basic-dc}. Then, there exists $\tau \in [k]$, such that
\begin{equation*}
    f(x_\tau) - f(x) - \ip{\gr g(x_\tau)}{x_\tau - x} ~~ \leq ~~ \frac{1}{k}\big(f(x_1) - g(x_1) - (f^* - g^*)\big) \quad \text{for all $x$.}
\end{equation*}
\end{corollary}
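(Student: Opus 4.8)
The plan is to invoke \Cref{thm:basic-cccp} to read \eqref{eqn:basic-cccp} as FW applied to the lifted, concave problem \eqref{eqn:basic-dc-epi}, and then simply specialize the concave-specific rate of \Cref{lem:fw-concave}. Write $\omega = (x,t)$, $\phi(\omega) = t - g(x)$, and $\Dc_{\mathrm{epi}} = \{(x,t) : f(x) \le t\}$. The set $\Dc_{\mathrm{epi}}$ is the epigraph of the convex function $f$, hence closed and convex, and $\phi$ is concave (affine in $t$, concave in $x$ since $g$ is convex), so \Cref{lem:fw-concave} applies verbatim to the FW sequence $\{(x_k,t_k)\}$ which, by \Cref{thm:basic-cccp}, coincides with the CCCP sequence $\{x_k\}$.

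First I would apply \Cref{lem:fw-concave} to this instance, obtaining an index $\tau \in [k]$ with $\ip{\gr \phi(\omega_\tau)}{\omega_\tau - \omega} \le (\phi(\omega_1) - \phi^\star)/k$ for every $\omega \in \Dc_{\mathrm{epi}}$. Since $\gr \phi(x,t) = (-\gr g(x),\, 1)$, the left-hand inner product expands as
\begin{equation*}
    \ip{\gr \phi(\omega_\tau)}{\omega_\tau - \omega} = -\ip{\gr g(x_\tau)}{x_\tau - x} + (t_\tau - t).
\end{equation*}
Two observations convert this into the claimed quantity. Along the FW iterates the step-size is $\eta_k = 1$ and complementary slackness forces $t_k^* = f(x_k^*)$ (as recorded in the proof of \Cref{thm:basic-cccp}), so $t_\tau = f(x_\tau)$. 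Moreover, because the bound holds for \emph{all} feasible $\omega$, for a given $x$ I would choose the tightest admissible lift $t = f(x)$, which is feasible. Substituting $t_\tau = f(x_\tau)$ and $t = f(x)$ turns the left-hand side into exactly
\begin{equation*}
    f(x_\tau) - f(x) - \ip{\gr g(x_\tau)}{x_\tau - x},
\end{equation*}
and the ``for all $\omega \in \Dc_{\mathrm{epi}}$'' quantifier of the lemma becomes the ``for all $x$'' of the corollary.

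It then remains to identify the constant $\phi(\omega_1) - \phi^\star$. Using $t_1 = f(x_1)$ gives $\phi(\omega_1) = f(x_1) - g(x_1)$, while
\begin{equation*}
    \phi^\star = \min_{(x,t) \in \Dc_{\mathrm{epi}}} \big( t - g(x) \big) = \min_x \big( f(x) - g(x) \big),
\end{equation*}
since the minimization over the free variable $t \ge f(x)$ is attained at $t = f(x)$; this optimal value is what the statement abbreviates as $f^* - g^*$ (i.e.\ $f^*, g^*$ are the values of $f,g$ at a \dc minimizer, so that $f^* - g^* = \min_x(f(x)-g(x))$, not the difference of the two individual minima). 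Plugging both into the lemma's bound delivers the corollary. I expect the delicate points to be bookkeeping rather than conceptual: computing $\gr \phi$ in the lifted coordinates with correct signs, confirming that the iterates remain on the graph $t_\tau = f(x_\tau)$ so that the lifted gap collapses to the CCCP gap, and pinning down the meaning of $f^* - g^*$ as the optimal value $\min_x(f(x)-g(x))$ of the reformulated (equivalently, original) program.
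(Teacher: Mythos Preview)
Your proposal is correct and follows essentially the same route as the paper: invoke \Cref{thm:basic-cccp} to identify CCCP with FW on the epigraph reformulation, apply \Cref{lem:fw-concave} to the concave objective $\phi(x,t)=t-g(x)$, and then substitute $t_\tau=f(x_\tau)$ (complementary slackness), $t_1=f(x_1)$ (initialization), and $\phi^\star=f^\star-g^\star$ to collapse the lifted gap to the CCCP gap. Your write-up is in fact slightly more explicit than the paper's (you spell out $\nabla\phi$ and the choice $t=f(x)$), but the argument is the same.
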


\begin{proof}
By \Cref{thm:basic-cccp}, CCCP is equivalent to FW applied to \eqref{eqn:basic-dc-epi}. Observe that \eqref{eqn:basic-dc-epi} is an instance of \eqref{eq:concave-min} with $\omega = (x,t)$,~~$\phi(\omega) = t - g(x)$, and $\Dc = \{(x,t): f(x) \leq t\}$. Since $\phi$ is concave, we can use \Cref{lem:fw-concave}. Hence, there exists $\tau \in [k]$, such that
\begin{equation*}
    t_\tau - t - \ip{\gr g(x_k)}{x_k - x} \leq \frac{(t_1 - g(x_1)) - (t^\star -  g^\star)}{k} \qquad \forall (x,t) : f(x) \leq t. 
\end{equation*} 
$t^\star = f^\star$ by definition, $t_1 = f(x_1)$ by initialization, and $t_\tau = f(x_\tau)$ by complementary slackness, see the proof of \Cref{thm:basic-cccp}. 
\end{proof}

\Cref{cor:basic-cccp} establishes non-asymptotic convergence guarantees of CCCP to a stationary point of the unconstrained \dc program \eqref{eqn:basic-dc}. Remarkably, this result does not require any assumption on the Lipschitz-smoothness of the functions.

\subsection{Convex constrained CCCP via FW}
Here the problem of interest is:
\begin{equation}
  \label{eq:10}
  \min_x\quad f(x) - g(x),\quad\subjto\quad x \in \Dc,
\end{equation}
for a closed convex set $\Dc$. We can equivalently write this problem as
\begin{equation}
  \label{eq:11}
  \min_x\quad f(x) + \ind_{\Dc}(x) - g(x),
\end{equation}
where $\ind_{\Dc}(\cdot)$ denotes the indicator function of $\Dc$. Since $h := f+\ind_{\Dc}$ is convex, we can apply the same idea as for the unconstrained case by looking at the equivalent problem
\begin{equation}
  \label{eq:12}
  \min_{x,t}\quad t - g(x),\quad\subjto\quad h(x) \le t.
\end{equation}
If we apply FW to~\eqref{eq:12}, the linear minimization step involves solving
\begin{equation}
  \label{eq:13}
  \min_{x,t}\ t-\ip{\gr g(x_k)}{x},\quad\subjto\quad f(x) + \ind_{\Dc}(x) \le t,
\end{equation}
which is clearly equivalent to solving
\begin{equation}
  \label{eq:14}
  \min_x\quad f(x) - \ip{\gr g(x_k)}{x},\quad\subjto\quad x \in \Dc.
\end{equation}
Subproblem~\eqref{eq:14} is precisely the update of convex constrained CCCP.

\section{CCCP with difference of convex constraints via FW}
\label{sec:cccp-constr}

At this point, it is natural to wonder whether the most general version of CCCP also admits a natural FW interpretation? Consider therefore the \dc programming problem with \dc constraints:
\begin{equation}
  \label{eq:2}
    \min_x \quad f_0(x)-g_0(x) \quad \subjto \quad f_i(x)-g_i(x) \le 0,\ \ i=1,\ldots,m.
\end{equation}

The \emph{advanced} CCCP for Problem \eqref{eq:2} is introduced in \citep[Theorem~1]{smola2005kernel}. 
This method updates its estimation to a solution of the following subproblem:
\begin{equation} \label{eqn:cccp+}\tag{CCCP+}
\begin{aligned}
    & \min_x & & f_0(x) - g_0(x_k) - \ip{\gr g_0(x_k)}{x - x_k} \\ 
    & ~\subjto & & f_i(x) - g_i(x_k) - \ip{\gr g_i(x_k)}{x - x_k} \le 0 ,\qquad i=1,\ldots,m.
\end{aligned}
\end{equation}

There is a major challenge to make a natural connection between CCCP+ and FW: 
The feasible set of \eqref{eq:2} is non-convex, and the feasible sets for \eqref{eq:2} and \eqref{eqn:cccp+} are not the same. 
As a result, it seems impossible to write CCCP+ as a special instance of the standard FW method. 
However, searching for a connection between CCCP+ and FW leads us to the discovery of a more general form of the FW method itself that we call FW+. This method is obtained by linearizing constraints along with the objective function, and it recovers CCCP+ as a special case while remaining amenable to a transparent FW-style convergence analysis.

\subsection{Extended FW algorithm} 
\label{sec:FW+}

Here is the problem template:
\begin{equation}
\label{eqn:FW+-template}
    \min_{\omega \in \Dc} \ \ \phi(\omega)\quad\subjto\quad \psi_i(\omega)\leq 0, \quad i = 1,\ldots,m, 
\end{equation}
where $\Dc$ is a closed and convex set in $\reals^n$ and $\phi$ and $\psi_i$ are continuously differentiable functions with bounded curvature. 

The FW+ steps for Problem~\eqref{eqn:FW+-template} are as follows:
\begin{equation}\label{eqn:FW+}\tag{FW+}
\begin{aligned}
    & \begin{aligned} 
    \omega_k^* \in & \argmin_{\omega \in \Dc} ~~ \phi(\omega_k) + \ip{\gr \phi(\omega_k)}{\omega - \omega_k}, \\ 
    & \quad\subjto\quad \psi_i(\omega_k) + \ip{\gr \psi_i(\omega_k)}{\omega-\omega_k} \leq 0, \quad i=1,\ldots,m \end{aligned}
    \\[0.75em]
    & \omega_{k+1} = (1-\eta_k) \omega_k + \eta_k \omega_k^*.
\end{aligned}
\end{equation}
Here, we focus on a setting where both $\phi$ and $\psi$ are concave. In this setting, we can choose $\eta_k = 1$. We defer a more general discussion on FW+ for other settings to an extension of this paper.

\begin{theorem}
\label{thm:fw+}
Suppose that the sequence $\{\omega_k\}$ is generated by the FW+ algorithm for Problem~\eqref{eqn:FW+-template} with concave functions $\phi$ and $\psi$. Then, there exists an index $\tau \in [k]$, at which $\psi_i(\omega_\tau) \leq 0$ for $i = 1\ldots,m$, and 
\begin{equation}\label{eqn:thm-fw+}
    \ip{\gr \phi(\omega_\tau)}{\omega_\tau - \omega} \leq \frac{\phi(\omega_1) - \phi^\star}{k}, \quad 
    \forall \omega \in \bigcap_{i=1}^m \{\omega \in \Dc : \psi_i(\omega_\tau) + \ip{\gr \psi_i(\omega_\tau)}{\omega - \omega_\tau} \leq 0 \}.
\end{equation}
\end{theorem}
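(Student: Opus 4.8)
The plan is to mirror the proof of \Cref{lem:fw-concave} almost verbatim for the objective part, while inserting one genuinely new ingredient to handle the linearized constraints. As in that lemma, concavity of $\phi$ forces the greedy step-size to be $\eta_k = 1$, so that $\omega_{k+1} = \omega_k^*$ and the FW+ update reduces to solving the linearized subproblem exactly. The argument then splits into two parts: (i) establishing that the iterates stay feasible for the \emph{original} constraints, so that a feasible $\tau$ exists; and (ii) establishing the gap bound by a descent-and-average argument. Part (ii) is essentially identical to \Cref{lem:fw-concave}, so the real work is in part (i).

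The new ingredient, which I expect to be the crux, is feasibility preservation. I would first adopt the standing assumption (standard for FW-type schemes) that $\omega_1$ is feasible, i.e. $\psi_i(\omega_1)\le 0$ for all $i$; this also guarantees that each linearized subproblem is itself feasible, since $\omega_k$ satisfies its own linearized constraint whenever $\psi_i(\omega_k)\le 0$. The key observation is that concavity of each $\psi_i$ makes its linearization a \emph{global upper bound}, so $\psi_i(\omega_{k+1}) \leq \psi_i(\omega_k) + \ip{\gr \psi_i(\omega_k)}{\omega_{k+1}-\omega_k}$. Because $\omega_{k+1}=\omega_k^*$ is by construction a feasible point of the linearized subproblem, the right-hand side is $\le 0$, whence $\psi_i(\omega_{k+1})\le 0$. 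An induction on $k$ then shows every iterate $\omega_1,\ldots,\omega_{k+1}$ is feasible for \eqref{eqn:FW+-template}. This is precisely where concavity of the \emph{constraints} (not merely the objective) is indispensable, and it is what makes the linearized constraints safe to use.

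With feasibility in hand, the gap bound follows as in \Cref{lem:fw-concave}. Since $\omega_{k+1}$ minimizes $\ip{\gr\phi(\omega_k)}{\omega}$ over the linearized feasible set at $\omega_k$, the maximum of $\ip{\gr\phi(\omega_k)}{\omega_k-\omega}$ over that set is attained at $\omega=\omega_{k+1}$, and concavity of $\phi$ then gives
\begin{equation*}
  \ip{\gr\phi(\omega_k)}{\omega_k - \omega_{k+1}} \leq \phi(\omega_k) - \phi(\omega_{k+1}).
\end{equation*}
I would sum this telescoping inequality over $i=1,\ldots,k$, bounding the total by $\phi(\omega_1)-\phi(\omega_{k+1}) \leq \phi(\omega_1)-\phi^\star$, where the last step uses that $\omega_{k+1}$ is feasible and hence $\phi(\omega_{k+1})\ge\phi^\star$. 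Taking $\tau\in[k]$ to be the index achieving the minimum gap, and using that the minimum is at most the average, delivers the claimed $\tfrac{1}{k}(\phi(\omega_1)-\phi^\star)$ bound; feasibility of $\omega_\tau$ comes for free from the induction above. The one subtlety I would flag explicitly is the initialization: the descent step at $k=1$ and the existence of a feasible small-gap index both rely on $\omega_1$ being feasible, so this should be recorded as a standing assumption rather than left implicit.
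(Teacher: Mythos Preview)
Your proposal is correct and follows essentially the same route as the paper's proof: feasibility is propagated by bounding $\psi_i(\omega_{k+1})$ above by its linearization at $\omega_k$ (which is $\le 0$ by the subproblem constraint), and the gap bound follows from the concavity inequality $\ip{\gr\phi(\omega_k)}{\omega_k-\omega_{k+1}}\le\phi(\omega_k)-\phi(\omega_{k+1})$, telescoping, and taking the minimum over $[k]$. Your explicit flagging of the $\omega_1$-feasibility assumption and the induction framing are helpful clarifications but do not change the argument.
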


\begin{proof}
First, we focus on the constraint function $\psi$. By concavity, for all $i$,
\begin{equation*}
\begin{aligned}
    \psi_i(\omega_{k+1}) 
    \leq \psi_i(\omega_k) + \ip{\gr \psi_i(\omega_k)}{\omega_{k+1} - \omega_k} 
    \leq 0,
\end{aligned}
\end{equation*}
where the last inequality follows from the constraint in the linear minimization step of FW+. 

Similarly, by concavity of $\phi$, we have
\begin{equation*}
    \ip{\gr \phi(\omega_k)}{\omega_k - \omega_{k+1}} \leq \phi(\omega_k) - \phi(\omega_{k+1}).
\end{equation*}
Averaging this inequality over $k$ gives 
\begin{equation*}
    \frac{1}{k} \sum_{i=1}^k \ip{\gr \phi(\omega_i)}{\omega_i - \omega_{i+1}} \leq \frac{\phi(\omega_1) - \phi(\omega_{k+1})}{k} \leq \frac{\phi(\omega_1) - \phi^\star}{k},
\end{equation*}
where the second inequality holds since $\omega_{k+1}$ is in a restriction of the feasible set of \eqref{eqn:FW+-template}.
Then, there exists an index $\tau \in [k]$, at which
\begin{equation*}
    \ip{\gr \phi(\omega_\tau)}{\omega_\tau - \omega_{\tau+1}} \leq \frac{\phi(\omega_1) - \phi^\star}{k}.
\end{equation*}
This inequality leads to \eqref{eqn:thm-fw+} by definition of $\omega_{\tau+1} = \omega_{\tau}^*$ in \eqref{eqn:FW+}.
\end{proof}

Before moving on, we characterize stationary points of Problem~\eqref{eqn:FW+-template} in the next lemma. This gives a clear interpretation of the bounds in \eqref{thm:fw+} as a perturbation of the stationarity condition. 

\begin{lemma}\label{lem:stationary}
Consider Problem~\eqref{eqn:FW+-template} and assume $\phi$ and $\psi$ are concave. Then, $\omega^* \in \reals^n$ is a feasible stationary point of Problem~\eqref{eqn:FW+-template} if  
\begin{gather}
\omega^* \in \Dc \quad \text{and} \quad \psi_i(\omega^*) \leq 0, \quad \forall i = 1,\ldots,m, \tag{feasibility} \\
\ip{\gr \phi(\omega^*)}{\omega - \omega^*} \geq 0, \quad  \forall \omega \in \bigcap_{i=1}^m \{ \omega \in \Dc : \psi_i'(\omega,\omega^*) \leq 0\}, \tag{stationarity}
\end{gather}
where $\psi_i'(\omega,\omega^*) := \psi_i(\omega^*) + \ip{\gr \psi_i(\omega^*)}{\omega - \omega^*}$.
\end{lemma}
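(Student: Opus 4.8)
The plan is to read the displayed (stationarity) condition as the first-order optimality condition of the \emph{convex} subproblem solved by the FW+ oracle at $\omega^*$, and then to transfer it to a genuine first-order stationarity statement for the original nonconvex problem~\eqref{eqn:FW+-template}. Write $\tilde F(\omega^*) := \bigcap_{i=1}^m \{\omega \in \Dc : \psi_i'(\omega,\omega^*) \leq 0\}$ for the linearized feasible set; it is convex because $\Dc$ is convex and each $\psi_i'(\cdot,\omega^*)$ is affine. Feasibility gives $\psi_i'(\omega^*,\omega^*) = \psi_i(\omega^*) \leq 0$, so $\omega^* \in \tilde F(\omega^*)$, and the condition $\ip{\gr\phi(\omega^*)}{\omega - \omega^*} \geq 0$ for all $\omega \in \tilde F(\omega^*)$ is exactly the variational inequality characterizing $\omega^*$ as a global minimizer of the linear objective $\ip{\gr\phi(\omega^*)}{\cdot}$ over $\tilde F(\omega^*)$. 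In other words, $\omega^*$ is a fixed point of the FW+ linear minimization step.

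The substantive step is to argue that this fixed-point property coincides with the standard first-order stationarity of $\omega^*$ for the true (nonconvex) feasible set $F := \{\omega\in\Dc : \psi_i(\omega)\le 0,\ i=1,\dots,m\}$. I would do this through tangent cones. First, rewrite the variational inequality as the normal-cone inclusion $-\gr\phi(\omega^*) \in N_{\tilde F(\omega^*)}(\omega^*)$; equivalently $\ip{\gr\phi(\omega^*)}{d}\ge 0$ for every $d$ in the tangent cone $T_{\tilde F(\omega^*)}(\omega^*)$. Because the linearized constraints are affine, this tangent cone equals the linearized cone $L(\omega^*) = T_\Dc(\omega^*) \cap \{d : \ip{\gr\psi_i(\omega^*)}{d}\le 0 \ \text{for active } i\}$, where an index is active when $\psi_i(\omega^*)=0$. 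Second, invoke the always-valid inclusion $T_F(\omega^*)\subseteq L(\omega^*)$ for the true set (the directional derivative of each active $\psi_i$ along any feasible sequence is nonpositive, and feasible sequences stay in $\Dc$). Since linearization preserves both the value and the gradient of $\psi_i$ at the base point $\omega^*$, the two active sets coincide, so $T_F(\omega^*)\subseteq L(\omega^*) = T_{\tilde F(\omega^*)}(\omega^*)$. Chaining the inclusions yields $\ip{\gr\phi(\omega^*)}{d}\ge 0$ for all $d\in T_F(\omega^*)$, i.e.\ $-\gr\phi(\omega^*)\in N_F(\omega^*)$, which together with the assumed feasibility is precisely the statement that $\omega^*$ is a feasible stationary point. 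Equivalently, one may extract KKT multipliers $\lambda_i\ge 0$ from optimality over the convex $\tilde F(\omega^*)$ and verify they are KKT multipliers for~\eqref{eqn:FW+-template}, using $\lambda_i\psi_i'(\omega^*,\omega^*)=\lambda_i\psi_i(\omega^*)=0$.

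The main obstacle is the constraint-qualification bookkeeping around the set $\Dc$. The identity $T_{\tilde F(\omega^*)}(\omega^*)=L(\omega^*)$ is immediate for the affine $\psi_i'$ constraints (Abadie holds automatically), but combining it with $T_\Dc(\omega^*)$ requires a mild regularity condition (for instance $\Dc$ polyhedral, or a relative-interior/Slater condition for the linearized system). I would either state such a qualification explicitly, or phrase the conclusion in the normal-cone form, which sidesteps the existence of multipliers. I would also remark that concavity of $\psi_i$, although part of the standing assumption, enters only to guarantee the inner approximation $\tilde F(\omega^*)\subseteq F$ that keeps the FW+ iterates feasible; it is the reason the linearized stationarity notion appearing as the $k\to\infty$ limit of \Cref{thm:fw+} is the meaningful one, rather than being needed for the present implication.
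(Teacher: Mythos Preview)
Your approach is correct but takes a genuinely different, more abstract route than the paper. The paper defines ``stationary'' as the absence of a feasible \emph{descent direction} (a $d$ with $\omega^*+\alpha d$ feasible for small $\alpha>0$ and $\ip{\gr\phi(\omega^*)}{d}<0$) and argues by elementary contradiction: if such a $d$ existed, then $\omega^*+\alpha d\notin\tilde F(\omega^*)$ by the assumed variational inequality, so either $\omega^*+\alpha d\notin\Dc$ (impossible) or some linearized constraint is violated, i.e.\ $\ip{\gr\psi_i(\omega^*)}{d}>0$ for an active~$i$; differentiability of $\psi_i$ then forces $\psi_i(\omega^*+\alpha d)>0$ for small $\alpha$, contradicting feasibility of $d$ in $F$. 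Inactive constraints are dispatched with a local ball argument. This works directly with the radial (feasible-direction) cone and needs no constraint qualification whatsoever.

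Your tangent-cone argument instead targets the slightly stronger conclusion $\ip{\gr\phi(\omega^*)}{d}\ge 0$ for all $d\in T_F(\omega^*)$ via the chain $T_F(\omega^*)\subseteq L(\omega^*)=T_{\tilde F(\omega^*)}(\omega^*)$, and you rightly flag that the equality $L(\omega^*)=T_{\tilde F(\omega^*)}(\omega^*)$ requires a CQ governing the interaction of the affine $\psi_i'$ constraints with the abstract set $\Dc$. So your route is more systematic and plugs the lemma directly into standard first-order optimality theory (normal cones, KKT multipliers), at the price of an extra regularity hypothesis that the paper's elementary argument sidesteps by proving the weaker radial-cone notion. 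Your closing observation---that concavity of $\psi_i$ is not actually used for this implication, only to ensure $\tilde F(\omega^*)\subseteq F$ so that FW+ iterates stay feasible---is correct and matches what the paper's proof implicitly shows.
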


\begin{proof}
A feasible point $\omega^*$ is called stationary if there is no feasible descent direction at $\omega^*$. Let's note that $\psi_i(\omega) \leq \psi_i'(\omega,\omega^*)$ due to concavity, therefore the set considered for the stationarity condition in \Cref{lem:stationary} is a restriction of the feasible set. The lemma implies that this restriction contains all feasible directions at $\omega^*$. Here is a simple proof. 

First, suppose that that the conditions in \Cref{lem:stationary} are satisfied at $\omega^*$ and the inequality constraint is active, \textit{i.e.,} $\psi_i(\omega^*) = 0$. Then, $\psi_i'$ is also active since $\psi_i'(\omega^*,\omega^*) = \psi_i(\omega^*) = 0$. 
Assume that there exists a direction $d \in \reals^n$ such that $\bar{\omega} := \omega^* + \alpha d$ is feasible for small enough $\alpha > 0$, and that $d$ is a descent direction, $\ip{\gr \phi(\omega^*)}{d} < 0$. 
By the stationarity condition in \Cref{lem:stationary}, this implies either $\bar{\omega} \notin \Dc$ or $\psi_i'(\bar{\omega},\omega^*) > 0$. 
The former clearly contradicts our feasibility assumption. Suppose the latter holds, then we get $\ip{\gr \psi_i(\omega^*)}{d} > 0$, meaning that $d$ is an ascent direction for $\psi_i$ at $\omega^*$. 
This also contradicts our feasibility assumption -- constraint is active and $d$ is an ascent direction, hence $d$ is directed outwards from the feasible region. 
By contradiction, we conclude that there is no feasible descent direction at $\omega^*$. 

Next, suppose the constraint is inactive. This means $\psi'$ is also inactive since $\psi_i'(\omega^*,\omega^*) = \psi_i(\omega^*) < 0$. Then, we can place an open ball $\mathcal{B}(\omega^*)$ of infinitesimal radius centered at $\omega^*$ such that $\psi_i'(\omega,\omega^*) \leq 0$ for all points in $\mathcal{B}(\omega^*)$. By the stationarity condition in \Cref{lem:stationary}, this inequality then implies $\ip{\gr \phi (\omega^*)}{\omega - \omega^*} \geq 0$ for all $\omega \in \Dc \cap \mathcal{B}(\omega^*)$ and proves that there is no feasible descent direction at $\omega^*$. 
\end{proof}

\Cref{thm:fw+} and \Cref{lem:stationary} together imply that FW+ provably finds a stationary point of Problem~\eqref{eqn:FW+-template} under the concavity assumptions with non-asymptotic convergence guarantees.
We complement this result in \Cref{sec:supp-fw+} by deriving convergence guarantees of FW+ when $\phi$ and $\psi$ are convex.

\subsection{CCCP+ via FW+}

We are now ready to build a connection between CCCP+ and FW+ by following similar ideas as in \Cref{sec:basic-cccp}.

\begin{prop}
\label{fact:cccp+}
CCCP+ is equivalent to the FW+ method applied to the following epigraph reformulation of Problem~\eqref{eq:2}:
\begin{equation}
\label{eqn:diff-constraints-reform}
\begin{aligned}
    \min_{x,t_0,\ldots,t^m} \quad t_0-g_0(x) \quad \subjto \quad & f_i(x) \leq t_i, & & i=0,1,\ldots,m \\
    & g_i(x) \geq t_i, & & i=1,\ldots,m.
\end{aligned}
\end{equation}
\end{prop}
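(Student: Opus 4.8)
The plan is to mirror the strategy of \Cref{thm:basic-cccp}: recast \eqref{eqn:diff-constraints-reform} as a concrete instance of the FW+ template \eqref{eqn:FW+-template}, run one FW+ iteration symbolically, and then eliminate the epigraph variables to expose \eqref{eqn:cccp+}. Concretely, I would set $\omega = (x, t_0, t_1, \ldots, t_m)$, take the objective $\phi(\omega) = t_0 - g_0(x)$, collect the convex constraints into $\Dc = \{\omega : f_i(x) \le t_i,\ i = 0, 1, \ldots, m\}$, and assign the remaining constraints to $\psi_i(\omega) = t_i - g_i(x) \le 0$ for $i = 1, \ldots, m$. Since each $g_i$ is convex, $\phi$ and every $\psi_i$ are concave (a linear term minus a convex function), while each $f_i(x) - t_i$ is convex, so $\Dc$ is closed and convex; this places us squarely in the concave regime where \Cref{thm:fw+} applies and, in particular, the step size is $\eta_k = 1$, so $\omega_{k+1} = \omega_k^*$. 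It is worth recording that this reformulation is genuinely equivalent to \eqref{eq:2}: minimizing over $t_0 \ge f_0(x)$ forces $t_0 = f_0(x)$, while for $i \ge 1$ a feasible $t_i$ with $f_i(x) \le t_i \le g_i(x)$ exists precisely when $f_i(x) - g_i(x) \le 0$.

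Next I would write out the FW+ linear minimization step. The gradient computation is pleasantly simple: because $\phi$ and $\psi_i$ are affine in the epigraph variables (with $\gr_{t_0}\phi = 1$ and $\gr_{t_i}\psi_i = 1$, all other $t$-partials vanishing), the linearizations around $\omega_k$ have their $t_{j,k}$ terms cancel, so the subproblem depends on $\omega_k$ only through $x_k$. A direct expansion gives the linearized objective $t_0 - g_0(x_k) - \ip{\gr g_0(x_k)}{x - x_k}$ and the linearized constraints $t_i - g_i(x_k) - \ip{\gr g_i(x_k)}{x - x_k} \le 0$, i.e. $t_i \le g_i(x_k) + \ip{\gr g_i(x_k)}{x - x_k}$, all while retaining the unlinearized convex constraints $f_i(x) \le t_i$ inherited from $\Dc$.

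The final step is to eliminate $t_0, \ldots, t_m$. Since the objective is increasing in $t_0$ and the only lower bound is $f_0(x) \le t_0$, at optimality $t_0 = f_0(x)$, turning the objective into $f_0(x) - \ip{\gr g_0(x_k)}{x}$, which agrees with the CCCP+ objective in \eqref{eqn:cccp+} up to an $x$-independent constant. For each $i \ge 1$, the variable $t_i$ is absent from the objective and is sandwiched by $f_i(x) \le t_i \le g_i(x_k) + \ip{\gr g_i(x_k)}{x - x_k}$; such a $t_i$ exists if and only if $f_i(x) - g_i(x_k) - \ip{\gr g_i(x_k)}{x - x_k} \le 0$, which is exactly the linearized CCCP+ constraint. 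Thus the $x$-marginal of the FW+ subproblem coincides with \eqref{eqn:cccp+}, and since $\eta_k = 1$ we obtain $x_{k+1} = x_k^*$, completing the equivalence.

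I expect the main obstacle to be this last variable-elimination step: one must argue carefully that projecting out the $t_i$ does not alter the optimal $x$, i.e. that the feasible region of \eqref{eqn:cccp+} is exactly the image of the FW+ feasible set under $(x, t) \mapsto x$, and that any optimal $x$ for CCCP+ can always be lifted back to a feasible $(x, t_0, \ldots, t_m)$ for the FW+ subproblem. This is a Fourier--Motzkin-style argument on the sandwich inequalities; the accompanying gradient bookkeeping is routine and parallels the derivation in \Cref{thm:basic-cccp}, with the pleasant added observation that the epigraph variables of the previous iterate never enter the subproblem at all.
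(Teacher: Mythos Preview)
Your proposal is correct and follows essentially the same approach as the paper: identify $\omega=(x,t_0,\ldots,t_m)$, $\phi(\omega)=t_0-g_0(x)$, $\psi_i(\omega)=t_i-g_i(x)$, $\Dc=\{\omega:f_i(x)\le t_i\}$, note concavity gives $\eta_k=1$, write out the FW+ subproblem, and eliminate the $t_i$ to recover \eqref{eqn:cccp+}. If anything, your treatment is more careful than the paper's, which simply writes down the subproblem and declares it ``clearly equivalent'' without spelling out the sandwich/Fourier--Motzkin argument you provide.
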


\begin{proof}
Problem~\eqref{eqn:diff-constraints-reform} is a special case of \eqref{eqn:FW+-template} with
\begin{equation*}
    \omega = (x, t_0, \ldots, t_m), \quad \phi(\omega) = t_0 - g_0(x), \quad \psi_i = t_i - g_i(x), \quad \text{and} \quad \Dc = \{ \omega : f_i(x) \leq t_i \}.
\end{equation*}

We consider FW+ for \eqref{eqn:diff-constraints-reform}. Since $\phi$ and $\psi_i$ are concave, we can use $\eta_k=1$ and the \eqref{eqn:FW+} procedure accounts to solving
\begin{equation*}
\begin{aligned}
    & \min_{x,t_0,\ldots,t_m} & & t_0 - g_0(x_k) - \ip{\gr g(x_k)}{x - x_k} \\ 
    &  \quad \, \subjto & & f_i(x) \leq t_i, & & i = 0,1,\ldots,m \\[0.5em]
    & & & t_i - g_i(x_k) - \ip{\gr g_i(x_k)}{x - x_k} \leq 0, & & i = 1,2,\ldots,m,
\end{aligned}
\end{equation*}
which is clearly equivalent to the \eqref{eqn:cccp+} subproblem.
\end{proof}

This connection also transfers the convergence theory of FW+ to CCCP+.

\begin{corollary}
\label{cor:cccp+-rate}
Suppose that the sequence $\{x_k\}$ is generated by \eqref{eqn:cccp+} applied to Problem~\eqref{eq:2}. Then, there exists an index $\tau \in [k]$, such that
\begin{equation*}
\begin{gathered}
    f_0(x_\tau) - f_0(x) - \ip{\gr g_0(x_\tau)}{x_\tau-x} \leq \frac{1}{k} \Big((f_0(x_1) - g_0(x_1)) - (f_0^\star - g_0^\star)\Big), \\[0.5em]
    \text{for all $x$ that satisfies} \quad f_i(x) - g_i(x_\tau) - \ip{\gr g_i(x_\tau)}{x - x_\tau} \leq 0, \quad i = 1,2,\ldots,m.
\end{gathered}
\end{equation*}
\end{corollary}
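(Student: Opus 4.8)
The plan is to mirror the derivation of \Cref{cor:basic-cccp}, but carried out on the lifted epigraph reformulation \eqref{eqn:diff-constraints-reform} and invoking \Cref{thm:fw+} in place of \Cref{lem:fw-concave}. First I would recall from \Cref{fact:cccp+} that CCCP+ coincides with FW+ applied to \eqref{eqn:diff-constraints-reform} under the identifications $\omega = (x, t_0, \ldots, t_m)$, $\phi(\omega) = t_0 - g_0(x)$, $\psi_i(\omega) = t_i - g_i(x)$, and $\Dc = \{\omega : f_i(x) \leq t_i,\ i = 0, \ldots, m\}$. Since $\phi$ and each $\psi_i$ are concave, \Cref{thm:fw+} applies directly and produces an index $\tau \in [k]$ together with the lifted bound $\ip{\gr \phi(\omega_\tau)}{\omega_\tau - \omega} \leq (\phi(\omega_1) - \phi^\star)/k$ valid over the linearized restricted feasible set.

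The second step is to make the gradient terms explicit. Because the auxiliary variables enter $\phi$ and $\psi_i$ only linearly, their gradients in those coordinates are constant; a short computation gives $\ip{\gr \phi(\omega_\tau)}{\omega_\tau - \omega} = (t_{0,\tau} - t_0) - \ip{\gr g_0(x_\tau)}{x_\tau - x}$ and $\psi_i(\omega_\tau) + \ip{\gr \psi_i(\omega_\tau)}{\omega - \omega_\tau} = t_i - g_i(x_\tau) - \ip{\gr g_i(x_\tau)}{x - x_\tau}$. I would then simplify the right-hand side exactly as in \Cref{cor:basic-cccp}: at optimality the objective drives $t_0$ down to $f_0$, so $\phi^\star = f_0^\star - g_0^\star$; initialization gives $t_{0,1} = f_0(x_1)$, hence $\phi(\omega_1) = f_0(x_1) - g_0(x_1)$; and complementary slackness at iteration $\tau$ gives $t_{0,\tau} = f_0(x_\tau)$. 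These substitutions turn the right-hand side into $\tfrac{1}{k}((f_0(x_1) - g_0(x_1)) - (f_0^\star - g_0^\star))$ and replace $t_{0,\tau}$ by $f_0(x_\tau)$ on the left.

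Finally I would specialize the lifted bound to recover the stated $x$-only inequality. Given any $x$ satisfying the linearized constraints $f_i(x) - g_i(x_\tau) - \ip{\gr g_i(x_\tau)}{x - x_\tau} \leq 0$, I need to exhibit auxiliary values making $\omega = (x, t_0, \ldots, t_m)$ a member of the restricted feasible set of \Cref{thm:fw+}. Choosing the tightest value $t_0 = f_0(x)$ maximizes the left-hand side and reproduces the expression $f_0(x_\tau) - f_0(x) - \ip{\gr g_0(x_\tau)}{x_\tau - x}$; for each $i \geq 1$ the two membership conditions together require $f_i(x) \leq t_i \leq g_i(x_\tau) + \ip{\gr g_i(x_\tau)}{x - x_\tau}$, and such a $t_i$ exists precisely because $x$ satisfies the assumed constraint.

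The main obstacle I anticipate is exactly this last bookkeeping step: confirming that the projection onto the $x$-coordinates of the restricted feasible set of \Cref{thm:fw+} coincides with the constraint set quantified in the corollary. This hinges on the sandwich condition $f_i(x) \leq t_i \leq g_i(x_\tau) + \ip{\gr g_i(x_\tau)}{x - x_\tau}$ being solvable in $t_i$, which is equivalent to the corollary's hypothesis; once this is verified, the remainder is routine substitution inherited from \Cref{cor:basic-cccp}.
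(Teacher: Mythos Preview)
Your proposal is correct and follows essentially the same approach as the paper: invoke \Cref{fact:cccp+} and \Cref{thm:fw+} to obtain the lifted bound in the $(x,t_0,\ldots,t_m)$ variables, then eliminate the auxiliary $t_i$'s. The paper's own proof is in fact terser than yours---it simply asserts that the lifted inequality ``is equivalent to the desired bounds by eliminating $t_0,t_1,\ldots,t_m$''---so your explicit sandwich argument $f_i(x) \leq t_i \leq g_i(x_\tau) + \ip{\gr g_i(x_\tau)}{x - x_\tau}$ and the choice $t_0 = f_0(x)$ spell out precisely what that elimination entails.
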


\begin{proof}
By \Cref{fact:cccp+} and \Cref{thm:fw+}, we get
\begin{equation*}
    t_{0,\tau} - t_0 - \ip{\gr g_0(x_\tau)}{x_\tau-x} \leq \frac{1}{k} \Big((t_{0,1} - g_0(x_1)) - (t_0^\star - g_0^\star)\Big) 
\end{equation*}
for all $(x,t_0,\ldots,t_m)$ that satisfies 
\begin{equation*}
\begin{aligned}
    f_i(x) & \leq t_i, & & \quad i = 0,1,\ldots,m \\
    t_i - g_i(x_\tau) - \ip{\gr g_i(x_\tau)}{x - x_\tau} & \leq 0, & & \quad i = 1,2,\ldots,m.
\end{aligned}
\end{equation*}
This is equivalent to the desired bounds by eliminating $t_0,t_1,\ldots,t_m$ in these formula.
\end{proof}

\section{Implications and Discussion}
\label{sec:disc}
\paragraph{Discussion.} We reiterate an important aspect of the non-asymptotic convergence guarantees proved for CCCP and CCCP+ in this paper. Owing to the special structure involving minimization of concave functions, the convergence rates are \textbf{\emph{independent of the curvature}} constant of FW (see~\eqref{eqn:curvature-constant}). This property is valuable since it delivers a convergence analysis of CCCP without imposing usual $L$-smoothness assumptions on either $f$, or $g$ or their difference $f-g$, a limitation that typical first-order methods impose. While surprising at first sight, we can reconcile with this intuitively because CCCP requires solving an optimization subproblem~\eqref{eq:17}, which is a stronger oracle than merely a gradient oracle for $f-g$. The ensuing faster convergence (i.e., tighter bound on the rate) may offer one explanation to the intuitive view on why CCCP can often be quite competitive~\citep{yuille2003concave,lipp2016variations}.

\paragraph{Implications.} Beyond generic convergence guarantees and the transfer of other progress on FW to CCCP, we note some specific implications worth further study here. The EM algorithm is a special case of CCCP~\citep[][\S4]{yuille2003concave}, and therefore a special case of FW. Thus, one should be able to develop a more refined understanding of global iteration complexity of EM, as well as sharper local convergence properties. Given the large number of settings in machine learning and statistics where the EM algorithm, or more generally, variational methods~\citep{blei2017variational} are used, a further deepening of their relation to FW-methods should prove fruitful. Similar implications about algorithms, convergence, and complexity apply to other instances of CCCP that have been studied, notably for problems such as matrix scaling~\citep{sinkhorn1967diagonal}. %
\paragraph{Extensions.} The connection established between FW and CCCP (FW+ and CCCP+) paves the path for numerous extensions that are worthy pursuing. We hope our work motivates others to pursue some of these or other extensions that these connections can inspire.

The first important extension worthy of study is to obtain stochastic variants of CCCP by building on the recent progress on non-convex stochastic FW~\citep{reddi2016stochastic} as well as non-convex variance reduced versions in the case of finite-sum problems~\citep{reddi2016svrg,allen2016variance,yurtsever2019conditional}.

Another line of research worthy of closer attention is to study the case where either $f(x)$ is non-differentiable, or even both $f$ and $g$ in~\eqref{eq:1} are non-differentiable. In that case, developing an extension of the relation between FW and CCCP to a relation between nonsmooth FW~\citep{thekumparampil2020projection} (although, as of now this nonsmooth FW method is limited to convex problems) and DCA should also prove to be fruitful, given the extensive progress DCA has witnessed over the decades~\citep{le2018dc}. The case where $f(x)$ is an indicator function of a convex set already fits trivially, as already discussed above. One might also speculate that progress on DCA could be transferred back to discover and develop nonsmooth FW methods.

Finally, given that both DCA and CCCP are special instances of Majorize-Minorize (MM) methods~\citep{hunter2004tutorial}, it is possible that other MM methods might be profitably viewed as instances of an optimization procedure (not necessarily FW)  whose geometric and convergence properties are better understood. We hope our work inspires a discovery of additional such connections.

\section*{Acknowledgements}
\label{sec:acknowledgements}

Alp Yurtsever received support from the Wallenberg AI, Autonomous Systems and Software Program (WASP) funded by the Knut and Alice Wallenberg Foundation. Suvrit Sra acknowledges support from the NSF CAREER grant (1846088) and the NSF CCF-2112665 (TILOS AI Research Institute). 

\bibliographystyle{plainnat}
\setlength{\bibsep}{3pt}
\bibliography{ref} 

\begin{thebibliography}{27}
\providecommand{\natexlab}[1]{#1}
\providecommand{\url}[1]{\texttt{#1}}
\expandafter\ifx\csname urlstyle\endcsname\relax
  \providecommand{\doi}[1]{doi: #1}\else
  \providecommand{\doi}{doi: \begingroup \urlstyle{rm}\Url}\fi

\bibitem[Abbaszadehpeivasti et~al.(2021)Abbaszadehpeivasti, de~Klerk, and
  Zamani]{abbaszadehpeivasti2021rate}
Hadi Abbaszadehpeivasti, Etienne de~Klerk, and Moslem Zamani.
\newblock On the rate of convergence of the difference-of-convex algorithm
  (dca).
\newblock \emph{arXiv preprint arXiv:2109.13566}, 2021.

\bibitem[Allen-Zhu and Hazan(2016)]{allen2016variance}
Zeyuan Allen-Zhu and Elad Hazan.
\newblock Variance reduction for faster non-convex optimization.
\newblock In \emph{International conference on machine learning}, pages
  699--707. PMLR, 2016.

\bibitem[Blei et~al.(2017)Blei, Kucukelbir, and McAuliffe]{blei2017variational}
David~M Blei, Alp Kucukelbir, and Jon~D McAuliffe.
\newblock Variational inference: A review for statisticians.
\newblock \emph{Journal of the American statistical Association}, 112\penalty0
  (518):\penalty0 859--877, 2017.

\bibitem[Canon and Cullum(1968)]{canon1968tight}
Michael~D Canon and Clifton~D Cullum.
\newblock A tight upper bound on the rate of convergence of frank-wolfe
  algorithm.
\newblock \emph{SIAM Journal on Control}, 6\penalty0 (4):\penalty0 509--516,
  1968.

\bibitem[Dempster et~al.(1977)Dempster, Laird, and Rubin]{dempster1977maximum}
Arthur~P Dempster, Nan~M Laird, and Donald~B Rubin.
\newblock Maximum likelihood from incomplete data via the {EM} algorithm.
\newblock \emph{Journal of the Royal Statistical Society: Series B
  (Methodological)}, 39\penalty0 (1):\penalty0 1--22, 1977.

\bibitem[Drori and Teboulle(2014)]{drori2014performance}
Yoel Drori and Marc Teboulle.
\newblock Performance of first-order methods for smooth convex minimization: a
  novel approach.
\newblock \emph{Mathematical Programming}, 145\penalty0 (1):\penalty0 451--482,
  2014.

\bibitem[Frank and Wolfe(1956)]{fw_original}
M.~Frank and P.~Wolfe.
\newblock An algorithm for quadratic programming.
\newblock \emph{Naval Research Logistics Quarterly}, 3(95, 1956.

\bibitem[Hunter and Lange(2004)]{hunter2004tutorial}
David~R Hunter and Kenneth Lange.
\newblock A tutorial on {MM} algorithms.
\newblock \emph{The American Statistician}, 58\penalty0 (1):\penalty0 30--37,
  2004.

\bibitem[Jaggi(2013)]{jaggi2013revisiting}
Martin Jaggi.
\newblock Revisiting {F}rank-{W}olfe: {P}rojection-free sparse convex
  optimization.
\newblock In \emph{International Conference on Machine Learning (ICML)}, pages
  427--435, 2013.

\bibitem[Kerdreux(2020)]{kerdreux2020accelerating}
Thomas Kerdreux.
\newblock \emph{Accelerating conditional gradient methods}.
\newblock PhD thesis, Universit{\'e} Paris sciences et lettres, 2020.

\bibitem[Khamaru and Wainwright(2018)]{khamaru2018convergence}
Koulik Khamaru and Martin Wainwright.
\newblock Convergence guarantees for a class of non-convex and non-smooth
  optimization problems.
\newblock In \emph{International Conference on Machine Learning}, pages
  2601--2610. PMLR, 2018.

\bibitem[Lacoste{-}Julien(2016)]{lacoste16}
Simon Lacoste{-}Julien.
\newblock {Convergence Rate of Frank-Wolfe for Non-Convex Objectives}.
\newblock \emph{arXiv:1607.00345}, 2016.

\bibitem[Lanckriet and Sriperumbudur(2009)]{lanckriet2009convergence}
Gert Lanckriet and Bharath~K Sriperumbudur.
\newblock On the convergence of the concave-convex procedure.
\newblock \emph{Advances in Neural Information Processing Systems}, 22, 2009.

\bibitem[Le~Thi and Pham~Dinh(2018)]{le2018dc}
Hoai~An Le~Thi and Tao Pham~Dinh.
\newblock Dc programming and {DCA}: thirty years of developments.
\newblock \emph{Mathematical Programming}, 169\penalty0 (1):\penalty0 5--68,
  2018.

\bibitem[Levitin and Polyak(1966)]{levitin1966constrained}
Evgeny~S Levitin and Boris~T Polyak.
\newblock Constrained minimization methods.
\newblock \emph{USSR Computational mathematics and mathematical physics},
  6\penalty0 (5):\penalty0 1--50, 1966.

\bibitem[Lipp and Boyd(2016)]{lipp2016variations}
Thomas Lipp and Stephen Boyd.
\newblock Variations and extension of the convex-concave procedure.
\newblock \emph{Optimization and Engineering}, 17\penalty0 (2):\penalty0
  263--287, 2016.

\bibitem[Martinet(1970)]{martinet1970regularisation}
B~Martinet.
\newblock Regularisation, d'inequations variationelles par approximations
  succesives.
\newblock \emph{Revue Francaise d'Informatique et de Recherche Operationelle},
  1970.

\bibitem[Nesterov(2018)]{nesterov2018complexity}
Yu~Nesterov.
\newblock Complexity bounds for primal-dual methods minimizing the model of
  objective function.
\newblock \emph{Mathematical Programming}, 171\penalty0 (1):\penalty0 311--330,
  2018.

\bibitem[Reddi et~al.(2016{\natexlab{a}})Reddi, Hefny, Sra, Poczos, and
  Smola]{reddi2016svrg}
Sashank~J Reddi, Ahmed Hefny, Suvrit Sra, Barnabas Poczos, and Alex Smola.
\newblock Stochastic variance reduction for nonconvex optimization.
\newblock In \emph{International conference on machine learning}, pages
  314--323. PMLR, 2016{\natexlab{a}}.

\bibitem[Reddi et~al.(2016{\natexlab{b}})Reddi, Sra, P{\'o}czos, and
  Smola]{reddi2016stochastic}
Sashank~J Reddi, Suvrit Sra, Barnab{\'a}s P{\'o}czos, and Alex Smola.
\newblock Stochastic {F}rank-{W}olfe methods for nonconvex optimization.
\newblock In \emph{Communication, Control, and Computing (Allerton), 2016 54th
  Annual Allerton Conference on}, pages 1244--1251. IEEE, 2016{\natexlab{b}}.

\bibitem[Rockafellar(1976)]{rockafellar1976monotone}
R~Tyrrell Rockafellar.
\newblock Monotone operators and the proximal point algorithm.
\newblock \emph{SIAM journal on control and optimization}, 14\penalty0
  (5):\penalty0 877--898, 1976.

\bibitem[Sinkhorn(1967)]{sinkhorn1967diagonal}
Richard Sinkhorn.
\newblock Diagonal equivalence to matrices with prescribed row and column sums.
\newblock \emph{The American Mathematical Monthly}, 74\penalty0 (4):\penalty0
  402--405, 1967.

\bibitem[Smola et~al.(2005)Smola, Vishwanathan, and Hofmann]{smola2005kernel}
Alex~J Smola, SVN Vishwanathan, and Thomas Hofmann.
\newblock Kernel methods for missing variables.
\newblock In \emph{International Workshop on Artificial Intelligence and
  Statistics}, pages 325--332. PMLR, 2005.

\bibitem[Tao(1997)]{tao1997convex}
Pham~Dinh Tao.
\newblock Convex analysis approach to {DC} programming: theory, algorithms and
  applications.
\newblock \emph{Acta mathematica vietnamica}, 22\penalty0 (1):\penalty0
  289--355, 1997.

\bibitem[Thekumparampil et~al.(2020)Thekumparampil, Jain, Netrapalli, and
  Oh]{thekumparampil2020projection}
Kiran~K Thekumparampil, Prateek Jain, Praneeth Netrapalli, and Sewoong Oh.
\newblock Projection efficient subgradient method and optimal nonsmooth
  {F}rank-{W}olfe method.
\newblock \emph{Advances in Neural Information Processing Systems},
  33:\penalty0 12211--12224, 2020.

\bibitem[Yuille and Rangarajan(2003)]{yuille2003concave}
Alan~L Yuille and Anand Rangarajan.
\newblock The concave-convex procedure.
\newblock \emph{Neural computation}, 15\penalty0 (4):\penalty0 915--936, 2003.

\bibitem[Yurtsever et~al.(2019)Yurtsever, Sra, and
  Cevher]{yurtsever2019conditional}
Alp Yurtsever, Suvrit Sra, and Volkan Cevher.
\newblock Conditional gradient methods via stochastic path-integrated
  differential estimator.
\newblock In \emph{International Conference on Machine Learning}, pages
  7282--7291. PMLR, 2019.

\end{thebibliography}

\newpage
\appendix
\begin{center}
\Large\bf Appendix: Additional Implications and Connections    
\end{center}
In this appendix we briefly comment on a few additional connections and implications. The main text is self-contained and can be read independent of this appendix.

\section{Proximal Point Method and Frank-Wolfe}
Consider the generic unconstrained optimization template
\begin{equation}
  \label{eqn:generic-optim}
    \min_x\quad f(x).
\end{equation}
The first connection worth noting is related to the fundamental \emph{proximal point method} (PPM)~\citep{martinet1970regularisation,rockafellar1976monotone}:
\begin{equation}
    x_{k+1} \gets \argmin_x \bigl(f(x)+\tfrac{1}{2\lambda_k}\|x-x_k\|^2\bigr).
\end{equation}
Rewrite \eqref{eqn:generic-optim} by adding and subtracting a strongly convex function $\phi(x)$:
\begin{equation}
\label{eqn:generic-optim-reform}
    \min_x \quad f(x) + \phi(x) -\phi(x).
\end{equation}
We can apply the idea of epigraph splitting:
\begin{equation}
\label{eqn:generic-optim-epi}
    \min_{x,t} \quad t -\phi(x)  \quad \subjto \quad f(x) + \phi(x)\leq t.
\end{equation}
This is concave minimization over a convex set. 
Similar to the ideas present in \Cref{sec:basic-cccp}, application of FW to \eqref{eqn:generic-optim-epi} leads to 
\begin{equation*}
  x_{k+1} \gets \argmin_x\quad f(x)+D_\phi(x,x_k),\quad k=0,1,\ldots
\end{equation*}
which is exactly the Bregman-PPM update. 
If in particular $\phi(x)=\half\|x\|^2$, with a suitable step-size choice in FW, we immediately obtain the PPM iteration in~\eqref{eqn:generic-optim}. Notably, in the above discussion $f$ need not be smooth, and we have indeed no loss in generality due to the FW view. Since many other methods in optimization reduce to PPM, and above we have reduced PPM to FW, many other optimization methods also reduce to FW via this view. Exploring implications of such algorithmic reductions is left as a topic for future study.

\section{Mirror Descent and Frank-Wolfe}
We revisit \eqref{eqn:generic-optim-reform} but this time consider the following epigraph form:
\begin{equation}
\label{eqn:mirror-descent-template}
    \min_{x,t} \quad t + f(x) - \phi(x) \quad \subjto \quad \phi(x) \leq t.
\end{equation}
The objective is not concave in this case, but we can still use FW with a suitable step-size if the curvature bounded. The linear minimization step becomes (after eliminating $t$):
\begin{equation*}
    x_k^* = \argmin_{x} \quad \phi(x) + \ip{\gr f(x_k) - u_k}{x}, 
\end{equation*}
where $u_k \in \partial \phi (x_k)$ is a subgradient of $\phi$ at $x_k$. This leads to the inclusion %
\begin{equation*}
    u_k - \gr f(x_k) \in \partial \phi(x_k^*). 
\end{equation*}
From the convex combination step of FW, we have the relation
\begin{equation*}
    x_k^* = \frac{1}{\eta_k} x_{k+1} + (1 - \frac{1}{\eta_k}) x_k.
\end{equation*}
By combining the last two relations, and rearranging, we get
\begin{equation}
    x_{k+1} = \gr \phi^* \big(u_k - \eta_k \gr f(x_k) \big), \quad u_k \in \partial \phi(x_k),
\end{equation}
where $\phi^*(y)$ is the Fenchel conjugate of $\phi$, and $\phi^*$ is smooth since $\phi$ is strongly convex. This is exactly the mirror descent update. 
If in particular $\phi(x)=\frac{L}{2}\|x\|^2$, the objective function in \eqref{eqn:mirror-descent-template} becomes concave and we can choose $\eta_k = 1$, which leads to the gradient descent step
\begin{equation}
    x_{k+1} = x_k - \frac{1}{L} \gr f(x_k).
\end{equation}

\subsection{Mirror Prox and Frank-Wolfe}
This time, we consider a generic composite optimization problem,
\begin{equation}
\label{eqn:composite-template}
    \min_{x} \quad f(x) + g(x).
\end{equation}
Suppose $f$ is $L$-smooth. Then, $f(x) - \frac{L}{2}\norm{x}^2$ is concave. 
Add and subtract $\frac{L}{2}\norm{x}^2$ in \eqref{eqn:composite-template}:
\begin{equation}
\label{eqn:composite-template-reform}
    \min_{x} \quad f(x) - \frac{L}{2}\norm{x}^2 + g(x) + \frac{L}{2}\norm{x}^2,
\end{equation}
and consider the following epigraph form
\begin{equation}
\label{eqn:composite-template-epigraph}
    \min_{x,t} \quad f(x) - \frac{L}{2}\norm{x}^2 + t \quad \subjto \quad g(x) + \frac{L}{2}\norm{x}^2 \leq t.
\end{equation}
If we apply FW on this formulation, the linear minimization step becomes
\begin{equation}
    x_k^* = \argmin_x \quad \ip{\gr f(x_k) - L \cdot x_k}{x} + g(x) + \frac{L}{2}\norm{x}^2,
\end{equation}
which is exactly the proximal gradient method step
\begin{equation}
    x_k^* = \prox_{\frac{1}{L}g} \big(x_k - \frac{1}{L}\gr f(x_k)\big). 
\end{equation}
Since the objective in \eqref{eqn:composite-template-epigraph} is concave, we can choose unit step-size for FW and get $x_k^* = x_{k+1}$. 

This idea trivially extends to the mirror-prox by using a Bregman divergence. 

\section{CCCP/FW and Non-Convex Proximal-Splitting}
Consider again the basic unconstrained problem $\min_x f(x)-g(x)$. Add and subtract $\half\|x\|^2$, and then consider the equivalent \emph{dual CCCP problem}
\begin{equation}
  \label{eq:supp16}
  (g+\half\|\cdot\|^2)^*(x) - (f + \half\|\cdot\|^2)^*(x).
\end{equation}
When applying CCCP to~\eqref{eq:supp16} we need to compute the gradient of the second term. This gradient can be obtained by recognizing that $f+\half\|x\|^2$ is strongly convex, and hence its dual is smooth, so that
\begin{equation*}
  \gr (f + \half\|\cdot\|^2)^*(z) = \argmax_{x}\ip{z}{x}-f(x)-\half\norm{x}^2.
\end{equation*}
But the argmax above is nothing but $\prox_f(z)$. Thus, to apply CCCP to~\eqref{eq:supp16} we need to solve the subproblem
\begin{equation}
  \label{eq:supp17}
  x_k^* \gets \argmin_x\ (g+\half\|\cdot\|^2)^*(x) - \ip{\prox_f(x_k)}{x}.
\end{equation}
To solve~\eqref{eq:supp17}, we can again compute the dual. As a shorthand, write $h^*\equiv (g+\half\norm{\cdot}^2)^*$. Thus, we obtain that $x_{k}^* = \argmax \ip{\prox_f(x_k)}{x}-h^*(x)$, whereby it must satisfy the optimality condition
\begin{equation*}
    \prox_f(x_k)=\gr h^*(x_k^*).
\end{equation*} 
From our previous argument (since $h=g+\half\norm{\cdot}^2$), we know that $\nabla h^*(x)= \argmax_z\ip{x}{z}-g(z)-\half\norm{z}^2$, so that
\begin{equation}
    \label{eq:xyz}
    \gr h^*(x_k^*) = \prox_g(x_k^*) = \prox_f(x_k).
\end{equation}
We state this relation only to highlight it as the prox-analog of the usual implicit update of CCCP, namely, $\nabla f(x_k^*)=\nabla g(x_k)$.

After computing $x_k^*$ via~\eqref{eq:supp17}, or via~\eqref{eq:xyz}, the update step of FW leads to
\begin{equation}
  \label{eq:18}
  x_{k+1} = (1-\eta_k)x_k + \eta_k x_k^*,
\end{equation}
Ultimately, the dual view permits us to apply CCCP without assuming differentiability, while obtaining convergence guarantees for it via the FW view applied to the dual problem~\eqref{eq:supp17}.

\section{Frank-Wolfe is a special case of CCCP}

We showed in \Cref{sec:basic-cccp} that CCCP is special instance of FW. Interestingly (though unsurprisingly), this connection goes both ways and we can cast FW as an instance of CCCP.

\begin{example}
  \label{ex:fw}
  Let $f(x)$ be an indicator function of a convex set $\Xc$. Then, 
  \begin{equation*}
    \min_x\ f(x)-g(x)\quad\Leftrightarrow\quad \max g(x),\ \text{s.t.}\ x \in \Xc.
  \end{equation*}
  Applying FW to this problem results in the same update, when solving the CCCP subproblem
  \begin{equation*}
    \min_x\ f(x) - g(y)-\ip{\gr g(y)}{x-y}.
  \end{equation*}
  Since $f$ is an indicator, this problem translates into the linear minimization step of FW
  \begin{equation*}
    x_k^* \in \argmin_x\ \ip{-\gr g(x_k)}{x},\quad\text{s.t.}\ x \in \Xc.
  \end{equation*}
\end{example}
Since the objective is concave, FW can use the unit step-size and $x_{k+1} = x_k^*$.  

Example~\ref{ex:fw} shows that FW is a special case of CCCP. But given the previous discussion, we saw that CCCP may be realized via FW too. So, in the end, are they formally equivalent?

\section{More details on FW+}
\label{sec:supp-fw+}

In \Cref{sec:FW+} we analyzed the FW+ method for the important setting where both $\phi$ and $\psi$ are concave. In this appendix, we present additional details on the convergence guarantees of FW+ when $\phi$ and $\psi$ are convex.

\begin{theorem}
Suppose that the sequence $\{\omega_k\}$ is generated by the FW+ algorithm with step-size $\eta_k = 2/(k+1)$ applied to Problem~\eqref{eqn:FW+-template}. Suppose $\phi$ and $\psi$ are convex with bounded curvature constants $C_\phi$ and $C_\psi$. Then, the following guarantees hold:
\begin{equation}
    \omega_k \in \Dc, \quad \phi(\omega_k) - \phi^* \leq \frac{2 C_\phi}{k+1}, \quad \text{and} \quad \psi(\omega_k) \leq \frac{2 C_\psi}{k+1}.
\end{equation}
\end{theorem}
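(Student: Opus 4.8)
The plan is to run the classical curvature-based Frank-Wolfe descent argument, but to apply it \emph{in parallel} to the objective $\phi$ and to each constraint $\psi_i$, since the linearized constraints in \eqref{eqn:FW+} interact with convexity in exactly the same way the objective does. Throughout I write $\omega^\star$ for a minimizer of \eqref{eqn:FW+-template} and $\phi^\star = \phi(\omega^\star)$ for its optimal value, and I assume a feasible start $\omega_1 \in \Dc$ with $\psi_i(\omega_1) \leq 0$.

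First I would verify $\omega_k \in \Dc$ for every $k$: each subproblem solution $\omega_k^*$ lies in $\Dc$, and $\omega_{k+1} = (1-\eta_k)\omega_k + \eta_k \omega_k^*$ is a convex combination, so feasibility of $\omega_1$ propagates by convexity of $\Dc$. Next, for the objective I would invoke the curvature constant \eqref{eqn:curvature-constant} with $\omega = \omega_k$, $\hat\omega = \omega_k^*$, $\eta = \eta_k$, which, using $\omega_{k+1} - \omega_k = \eta_k(\omega_k^* - \omega_k)$, gives $\phi(\omega_{k+1}) \leq \phi(\omega_k) + \eta_k \ip{\gr\phi(\omega_k)}{\omega_k^* - \omega_k} + \tfrac{\eta_k^2}{2}C_\phi$. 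The key step is to control the inner product. Unlike standard FW, the feasible region of the linear minimization step is the \emph{linearized} set $\{\omega \in \Dc : \psi_i(\omega_k) + \ip{\gr\psi_i(\omega_k)}{\omega - \omega_k} \leq 0,\ i=1,\ldots,m\}$, which changes every iteration. I would observe that, because each $\psi_i$ is convex, its linearization underestimates it; hence any $\omega^\star$ feasible for \eqref{eqn:FW+-template} satisfies all linearized constraints and is admissible in every subproblem. Optimality of $\omega_k^*$ then yields $\ip{\gr\phi(\omega_k)}{\omega_k^* - \omega_k} \leq \ip{\gr\phi(\omega_k)}{\omega^\star - \omega_k} \leq \phi^\star - \phi(\omega_k)$, the last step by convexity of $\phi$. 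Writing $h_k := \phi(\omega_k) - \phi^\star$ produces the familiar recursion $h_{k+1} \leq (1-\eta_k)h_k + \tfrac{\eta_k^2}{2}C_\phi$.

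For the feasibility bound I would repeat the curvature inequality for each $\psi_i$ and, in place of an optimality comparison, use the subproblem constraint itself: admissibility of $\omega_k^*$ means $\ip{\gr\psi_i(\omega_k)}{\omega_k^* - \omega_k} \leq -\psi_i(\omega_k)$, so with $v_k := \psi_i(\omega_k)$ I obtain the \emph{identical} recursion $v_{k+1} \leq (1-\eta_k)v_k + \tfrac{\eta_k^2}{2}C_{\psi_i}$. Both recursions are then closed by the textbook FW induction for $\eta_k = 2/(k+1)$: the unit first step $\eta_1 = 1$ annihilates the $h_1$ and $v_1$ contributions and supplies the base case (here $C_\phi, C_{\psi_i} \geq 0$ by convexity ensures positivity), after which the elementary inequality $k(k+2) \leq (k+1)^2$ propagates $h_k \leq 2C_\phi/(k+1)$ and $v_k \leq 2C_{\psi_i}/(k+1)$; taking $C_\psi = \max_i C_{\psi_i}$ gives the stated constraint bound.

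I expect the main obstacle to be the objective comparison step, namely justifying that the moving, linearized feasible sets all contain a single fixed optimizer $\omega^\star$. This is exactly where convexity of the constraints $\psi_i$ is essential: only then does each linearization lie below $\psi_i$, so a genuinely feasible point remains feasible in every linearized subproblem and can serve as a comparison point for the FW gap. Once this is in place, the remaining two recursions and their induction are the standard curvature argument applied twice.
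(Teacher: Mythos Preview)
Your proposal is correct and follows essentially the same route as the paper: both apply the curvature inequality to $\phi$ and to each $\psi_i$, use convexity of $\psi_i$ to guarantee that the true optimum $\omega^\star$ lies in every linearized feasible set (so the FW comparison for $\phi$ goes through), and use the subproblem's linearized constraint directly to get the recursion for $\psi_i$, arriving at the identical one-step recursion $(1-\eta_k)\cdot(\text{old}) + \tfrac{\eta_k^2}{2}C$ in both cases. The only cosmetic difference is that the paper closes the recursion by explicitly unrolling the product $\prod_{j=i+1}^k (1-\eta_j) = \frac{i(i+1)}{k(k+1)}$ and summing, whereas you invoke the equivalent induction via $k(k+2)\le (k+1)^2$; the paper also treats $\psi$ before $\phi$, but the content is the same.
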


\begin{proof}
$\omega_k$ always remains in $\Dc$ by design of the algorithm. 
Next, focus on the constraint function $\psi$. Since $\psi$ has bounded curvature $C_\psi$, 
\begin{equation*}
\begin{aligned}
    \psi(\omega_{k+1}) 
    & \leq \psi(\omega_k) + \ip{\gr \psi(\omega_k)}{\omega_{k+1} - \omega_k} + \frac{1}{2}\eta_k^2 C_\psi \\
    & = (1-\eta_k) \psi(\omega_k) + \eta_k (\psi(\omega_k) + \ip{\gr \psi(\omega_k)}{\omega^*_k - \omega_k}) + \frac{1}{2}\eta_k^2 C_\psi \\
    & \leq (1-\eta_k) \psi(\omega_k) + \frac{1}{2}\eta_k^2 C_\psi,
\end{aligned}
\end{equation*}
where the last inequality follows from the constraint in the linear minimization step of FW+. Unrolling this inequality, we obtain
\begin{equation*}
    \psi(\omega_{k+1}) \leq (1-\eta_1)\psi(\omega_1) + \frac{1}{2} C_\psi \sum_{i=1}^k \eta_i^2 \prod_{j=i+1}^k (1-\eta_j).
\end{equation*}
We choose $\eta_k = 2/(k+1)$, hence 
\begin{equation*}
    \psi(\omega_{k+1}) 
    \leq 2 C_\psi \sum_{i=1}^k \frac{1}{(i+1)^2} \prod_{j=i+1}^k \frac{j-1}{j+1}
    = 2 C_\psi \sum_{i=1}^k \frac{1}{(i+1)^2} \frac{i(i+1)}{k(k+1)}
    \leq \frac{2 C_\psi}{k+1}.
\end{equation*}
This completes the proof for the constraint function. 

The proof of convergence for the objective function follows similarly to the standard proof of the FW method. By the bounded curvature assumption, we have
\begin{equation*}
\begin{aligned}
    \phi(\omega_{k+1}) - \phi^*
    & \leq \phi(\omega_k) - \phi^* + \ip{\gr \phi(\omega_k)}{\omega_{k+1} - \omega_k} + \frac{1}{2}\eta_k^2 C_\phi \\
    & = \phi(\omega_k) - \phi^* + \eta_k \ip{\gr \phi(\omega_k)}{\omega_k^* - \omega_k} + \frac{1}{2}\eta_k^2 C_\phi.
\end{aligned}
\end{equation*}
By definition of $w_k^*$,
\begin{equation*}
\begin{multlined}
    \phi(\omega_{k+1}) - \phi^*
    \leq \phi(\omega_k) - \phi^* + \eta_k \ip{\gr \phi(\omega_k)}{\omega - \omega_k} + \frac{1}{2}\eta_k^2 C_\phi, \qquad \\ 
    \qquad \forall \omega \in \{\omega \in \Dc: \psi(\omega_\tau) + \ip{\gr \psi(\omega_\tau)}{\omega - \omega_\tau} \leq 0 \}.
\end{multlined}
\end{equation*}
Since $\psi$ is convex, this is a relaxation of the feasible set, it contains $\omega^*$, hence the above inequality holds also for $\omega = \omega^*$: 
\begin{equation*}
\begin{aligned}
    \phi(\omega_{k+1}) - \phi^*
    & \leq \phi(\omega_k)- \phi^* + \eta_k \ip{\gr \phi(\omega_k)}{\omega^* - \omega_k} + \frac{1}{2}\eta_k^2 C_\phi \\
    & \leq (1-\eta_k) (\phi(\omega_k)- \phi^*) + \frac{1}{2}\eta_k^2 C_\phi.
\end{aligned}
\end{equation*}
Then, similar to the above discussion on the convergence of $\psi$, we get
\begin{equation*}
\begin{aligned}
    \phi(\omega_{k+1}) - \phi^*
    & \leq \frac{2 C_\phi}{k+1}.
\end{aligned}
\end{equation*}
This completes the proof.
\end{proof}

\end{document}